\newtheorem{thrm}{Theorem}[section]
   \newtheorem{fact}[thrm]{Proposition}
   \newtheorem{lemma}[thrm]{Lemma}
   \newtheorem{col}[thrm]{Corollary}
   \newtheorem{defn}[thrm]{Definition}
   \newtheorem{remark}[thrm]{Remark}
   \newtheorem{problem}[thrm]{Problem}
\newenvironment{proofclaim}
{\textit{\\Proof.}}{\begin{flushright}
$\lozenge$
\end{flushright}}
\newenvironment{claim}{\textbf{\\\\Claim.}}
\DeclareMathOperator{\Fin}{Fin}
\title{On the t-equivalence relation}
\author[M.\ Krupski]{Miko\l aj Krupski}
\address{Institute of Mathematics\\ Polish Academy of Sciences\\ \newline Ul. \'Sniadeckich 8\\00--956 Warszawa\\ Poland }
\email{krupski@impan.pl}
\subjclass[2010]{Primary 54C35}
\keywords{$C_p(X)$ space; t-equivalence; C-space; countable-dimension; $\kappa$-discreteness}
\begin{document}
\baselineskip=17pt

\begin{abstract}
For a completely regular space $X$, denote by $C_p(X)$ the space of continuous real-valued functions on $X$, with the pointwise convergence topology.
In this article we strengthen a theorem of O.\ Okunev concerning preservation of some topological properties of $X$ under homeomorphisms
of function spaces $C_p(X)$. From this result we conclude new theorems similar to results
of R.\ Cauty and W.\ Marciszewski about preservation of certain dimension-type properties of spaces $X$
under continuous open surjections between function spaces $C_p(X)$.
\end{abstract}

\maketitle

\section{Introduction}
One of the main objectives in the theory of $C_p(X)$ spaces is to classify spaces of continuous functions up to
homeomorphisms. One can do this by investigating which topological properties of a space $X$ are shared with a space
$Y$, provided $X$ and $Y$ are {\em $t$-equivalent}, i.e. $C_p(X)$ and $C_p(Y)$ are homeomorphic. 
Recently, O.~Okunev published a paper \cite{O} in which he found some new topological invariants of the $t$-equivalence relation.
All of them are obtained from the following, very interesting Theorem (see \cite[Theorem 1.1]{O})
\begin{thrm}(Okunev)\label{Thrm-O}
Suppose that there is an open continuous surjection from $C_p(X)$ onto $C_p(Y)$. Then there are spaces $Z_n$ ,
locally closed subspaces $B_n$ of $Z_n$, and locally closed
subspaces $Y_n$ of $Y$, $n\in \mathbb{N}^+$, such that each $Z_n$ admits a perfect finite-to-one mapping onto a closed subspace of
$X^n$, $Y_n$ is an image under a perfect mapping of $B_n$, and $Y=\bigcup\{Y_n:\; n\in\mathbb{N}^+\}$.
\end{thrm}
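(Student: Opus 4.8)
The plan is to distill from the open continuous surjection $\varphi\colon C_p(X)\to C_p(Y)$ a combinatorial \emph{support} assignment $y\mapsto\operatorname{supp}(y)$, attaching to each $y\in Y$ a finite subset of $X$, and then to stratify $Y$ by the size of these supports. For every $y\in Y$ the composition $f\mapsto\varphi(f)(y)$ is a continuous real-valued function on $C_p(X)$, and since a basic neighbourhood in $C_p(X)$ constrains only finitely many coordinates, continuity of this functional at each point of $C_p(X)$ produces a local finite set of coordinates of $X$ that controls the value. Assembling these, I would define $\operatorname{supp}(y)$ to be the minimal set of points of $X$ on which $\varphi(f)(y)$ genuinely depends. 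The decisive point is that openness of $\varphi$ forces $\operatorname{supp}(y)$ to be \emph{finite}: for a merely continuous map one only obtains a countable support, and upgrading countable to finite is exactly where the openness of $\varphi$ must be exploited. This is the heart of the argument.

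Second, I would establish a semicontinuity property of the support assignment, namely that $\{y\in Y:\ |\operatorname{supp}(y)|\le n\}$ is closed in $Y$ for each $n$. This rests on a closedness argument on the dual side: continuity together with openness of $\varphi$ ensures that the property ``finitely many prescribed coordinates suffice to determine $\varphi(f)(y)$'' is inherited by limits of $y$. Granting this, I would set $Y_n=\{y\in Y:\ |\operatorname{supp}(y)|=n\}$; since $Y_n=\{|\operatorname{supp}|\le n\}\setminus\{|\operatorname{supp}|\le n-1\}$ is a difference of two closed sets, each $Y_n$ is locally closed in $Y$, and plainly $Y=\bigcup\{Y_n:\ n\in\mathbb{N}^+\}$.

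The remaining step translates this finite-support data into the required span $X^n\leftarrow Z_n\supseteq B_n\to Y_n$. Here I would first record a \emph{dual} finiteness — that each $n$-element support is shared by only finitely many $y\in Y_n$ — which again follows from openness, since infinitely many points of $Y$ with a common finite support would force $\varphi$ to realise infinitely many independent real values out of finitely many coordinates of $X$, contradicting openness. For $y\in Y_n$ the support is an $n$-element subset of $X$, which I would code by its $n!$ orderings as points of $X^n$; the graph $B_n=\{(y,\bar x)\in Y_n\times X^n:\ \bar x\text{ enumerates }\operatorname{supp}(y)\}$ then projects onto $Y_n$ by $(y,\bar x)\mapsto y$, a perfect surjection with finite ($n!$-element) fibres. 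Taking $Z_n$ to be a suitable closed ambient space (such as the closure of this graph, where properness is available), the projection $Z_n\to X^n$, $(y,\bar x)\mapsto\bar x$, is finite-to-one by the two finiteness facts above and perfect onto a closed subspace of $X^n$, with $B_n\subseteq Z_n$ locally closed. This is a careful but essentially routine bookkeeping translation of the support data.

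The principal obstacle is the first step: proving that openness of $\varphi$ promotes the generic countable support to a finite one, and doing so with enough uniformity to yield the semicontinuity of the second step and the dual finiteness of the third. I expect this to proceed by analysing preimages under $\varphi$ of small neighbourhoods of the zero function in $C_p(Y)$ and showing that an open map cannot disperse the dependence of $\varphi(f)(y)$ over infinitely many coordinates of $X$ without violating openness at the relevant functions; once the finiteness and semicontinuity are in hand, assembling the spaces $Z_n$, $B_n$ and the stated perfect and finite-to-one maps is a matter of careful verification.
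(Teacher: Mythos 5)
Your first step --- which you correctly identify as the heart of your argument --- rests on a false claim. Openness of $\varphi$ does \emph{not} force the functional $f\mapsto\varphi(f)(y)$ to depend on only finitely many coordinates of $f$. Counterexample: let $X=Y=\omega$ (countable discrete), so that $C_p(X)=C_p(Y)=\mathbb{R}^\omega$, and let $\varphi$ be the shear defined by $\varphi(f)(0)=f(0)+\sum_{n\geqslant 1}2^{-n}\sin\bigl(f(n)\bigr)$ and $\varphi(f)(n)=f(n)$ for $n\geqslant 1$. The series converges uniformly, so $\varphi$ is continuous; it is in fact a homeomorphism (its inverse is the same shear with a minus sign), hence open. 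Yet $\varphi(f)(0)$ genuinely depends on \emph{every} coordinate of $f$, so $\operatorname{supp}(0)$ in your sense is infinite. Consequently your strata $Y_n=\{y:\lvert\operatorname{supp}(y)\rvert=n\}$ need not cover $Y$, and the plan collapses at its foundation: the "upgrade from countable to finite support" that you hope openness provides is simply unavailable. There is also a structural defect even where supports would be finite: a minimal controlling set need not be unique, so a single-valued assignment $y\mapsto\operatorname{supp}(y)$ is not well defined, and your semicontinuity and perfectness claims are asserted rather than proved.

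What is true, and what the paper (following Okunev) actually uses, is a much weaker one-sided finiteness that comes from continuity alone. After normalizing $\Phi(0)=0$, continuity of $\Phi$ at the zero function gives, for every $y\in Y$, a finite tuple $\overline{x}\in X^n$ and an $m$ with $\Phi\bigl(O_X(\overline{x};\tfrac{1}{m})\bigr)\subseteq\overline{O}_Y(y;1)$: smallness of $f$ on $\overline{x}$ merely \emph{bounds} $\lvert\Phi(f)(y)\rvert$ by $1$; nothing is claimed about what $\Phi(f)(y)$ "depends on". The paper then works with the whole relation $Z_{m,n}=\{(\overline{x},y):\Phi(O_X(\overline{x};\tfrac{1}{m}))\subseteq\overline{O}_Y(y;1)\}$ rather than with a support function, stratifies $Y$ doubly by $(m,n)$ via $Y_{m,n}=A_{m,n}\setminus A_{m,n-1}$, and obtains closedness and perfectness not from semicontinuity of supports but from compactness of $\beta Y$ and $(\beta X)^n$ (projections along compact factors are perfect). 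Openness of $\Phi$ enters exactly where your third step guesses it should: if infinitely many $y$ shared one pair $(\overline{x},m)$, then the open set $\Phi\bigl(O_X(\overline{x};\tfrac{1}{m})\bigr)$ would lie in the intersection of the sets $\overline{O}_Y(y;1)$, which is impossible since a nonempty open subset of $C_p(Y)$ constrains only finitely many points of $Y$. This dual finiteness (finitely many $y$ per $\overline{x}$, i.e.\ $p_{m,n}$ is finite-to-one) is the only finiteness one gets; the primal finiteness you aim for (finitely many $x$'s attached to each $y$) is not even claimed in the statement under review --- the mapping $B_n\to Y_n$ there is only perfect --- and establishing a correct version of it (for the relation, on the right stratum, via a $\Delta$-system argument) is precisely this paper's added contribution in its Theorem 2.2.
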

In the formulation of the above theorem in \cite{O} the assumption about the existence of an open continuous surjection is replaced by the
assumption that these function spaces are homeomorphic. However, as noticed in \cite[remarks at the end of section 1]{O} a careful analysis
of the proof reveals that the weaker assumption is sufficient.  
In this paper we will discuss the proof of the above theorem (detailed proof can be found in \cite{O}). Then using an idea from
\cite{M} we will show how to slightly improve Okunev's result, answering Question 1.9 from \cite{O}.
In the subsequent sections we will derive a few corollaries from strengthened form of Okunev's theorem.
We will use it to find new invariants of the $t$-equivalence relation concerning dimension. These results are in the spirit
of the significant theorems of R.\ Cauty from \cite{C} and W.\ Marciszewski from \cite{M}.

We should also mention here, that the answer to Question 2.12 posed in \cite{O} is known (see \cite{BH}, \cite{G}).
Thus one can show (see \cite{O})
that $\sigma$-discreteness is preserved by the $t$-equivalence relation (see \cite[Question 2.9]{O}).
In fact, from a result of Gruenhage from \cite{G} one can conclude more,
namely that $\kappa$-discreteness is preserved by the relation of $t$-equivalence (see Theorem \ref{k-discreteness} below).
We discuss this in Section 3.\\

Unless otherwise stated, all spaces in this note are assumed to be Tychonoff.
For a space $X$ we denote by $C_p(X)$ the space of continuous, real-valued
functions on $X$ with the pointwise convergence topology. We say that spaces $X$ and $Y$ are {\em $t$-equivalent}, provided $C_p(X)$ and
$C_p(Y)$ are homeomorphic. The subspace of a topological space is {\em locally closed} if it is the intersection of a closed set and an open set. The mapping
$\varphi:X\rightarrow Y$ between topological spaces is {\em perfect}, provided it is closed and all fibers $\varphi^{-1}(y)$ are compact.
For a space $X$ we denote by $\Fin(X)$ the hyperspace of all finite subsets of $X$ with the Vietoris topology.
We follow Engelking's book \cite{E} regarding dimension theory.

\section{On a result of Okunev}

The main goal of this section is to answer Question 1.9 from \cite{O}, i.e. to prove that in the statement of Theorem \ref{Thrm-O}
we may additionally require that for every $n\in \mathbb{N}^+$ the space
$Y_n$ is in fact an image under a perfect finite-to-one mapping of $B_n$.
To this end we need to discuss the main ideas from \cite{O}. For the convenience of the reader our notation will be almost the same as in \cite{O}.

The real line $\mathbb{R}$ is considered as a subspace of its two-point compactification $I=\mathbb{R}\cup \{-\infty,+\infty\}$.
For a continuous function $f:Z\rightarrow\mathbb{R}$, the function $\widetilde{f}:\beta Z\rightarrow I$ is the continuous extension of $f$.
For every $n\in \mathbb{N}^+$, $\overline{z}=(z_1, \ldots, z_n)\in (\beta Z)^n$ and $\varepsilon>0$ we put
$$O_Z(\overline{z};\varepsilon)=O_Z(z_1,\ldots,z_n;\varepsilon)=\{f\in C_p(Z):\lvert \widetilde{f}(z_1)\rvert < \varepsilon,\ldots ,
\lvert \widetilde{f}(z_n)\rvert < \varepsilon\}.$$
Similarly, for every $A\in \Fin(Z)$ and $\varepsilon>0$ we put
$$O_Z(A;\varepsilon)=\{f\in C_p(Z):\forall z\in A\;\;\lvert f(z) \rvert<\varepsilon\}.$$
For a point $z\in Z$ we put
$$\overline{O}_Z(z;\varepsilon)=\{f\in C_p(Z):\lvert f(z)\rvert\leqslant\varepsilon\}.$$
Let $\Phi:C_p(X)\rightarrow C_p(Y)$ be an open surjection which takes the zero function on $X$ to the zero function on $Y$ (we can assume this since
$C_p(X)$ and $C_p(Y)$ are homogeneous).
For every $(m,n)\in \mathbb{N}^+\times\mathbb{N}^+$ we put
$$Z_{m,n}=\{(\overline{x},y)\in X^n\times Y:\Phi(O_X(\overline{x};\tfrac{1}{m}))\subseteq \overline{O}_Y(y;1)\}.$$
By $\pi_X:X^n\times \beta Y\rightarrow X^n$ we denote the projection and we put
$$p_{m,n}=\pi_X\upharpoonright Z_{m,n}:Z_{m,n}\rightarrow X^n.$$
Similarly, by $\pi_{\beta Y}:(\beta X)^n\times \beta Y\rightarrow \beta Y$ we denote the projection and we put
$$A_{m,n}=\pi_{\beta Y}(Z_{m,n}).$$
Denote by $S_{m,n}$ the closure of $Z_{m,n}$ in $(\beta X)^n\times \beta Y$.
For every $m\in \mathbb{N}^+$ we put $Y_{m,1}=A_{m,1}$ and for every $n>1$, $Y_{m,n}=A_{m,n}\setminus A_{m,n-1}$.
Finally let us put $B_{m,n}=S_{m,n}\cap\pi_{\beta Y}^{-1}(Y_{m,n})$ and let
$$r_{m,n}=\pi_{\beta Y}\upharpoonright B_{m,n}:B_{m,n}\rightarrow Y_{m,n}.$$

The following properties
are satisfied (see \cite{O}):
\begin{enumerate}
 \item[(0)] the set $Z_{m,n}$ is closed in $X^n\times\beta Y$;
 \item[(1)] $p_{m,n}$ maps perfectly $Z_{m,n}$ onto a closed subset of $X^n$;
 \item[(2)] the mapping $p_{m,n}$ is finite-to-one;
 \item[(3)] the sets $A_{m,n}$ are closed, thus the sets $Y_{m,n}$ are locally closed;
 \item[(4)] $Y=\bigcup_{m,n\in\mathbb{N}^+}Y_{m,n}$;
 \item[(5)] the set $B_{m,n}$ is locally closed in $Z_{m,n}$;
 \item[(6)] the mapping $r_{m,n}$ is perfect;
\end{enumerate}
Clearly, Theorem \ref{Thrm-O} follows from (1)--(6).

We will use the following version of the $\Delta$-system Lemma which can be easily proved by induction (see also \cite[A.1.4]{vM})
\begin{fact}\label{delta-system}
Let $X$ be a set, let $n\in\mathbb{N}^+$ and let $\mathcal{A}$ be an infinite collection of subsets of $X$ each of cardinality $\leqslant n$.
Then there is $A_0\subseteq X$ with $\lvert A_0\rvert<n$ and a sequence $A_1, A_2, \ldots$ of distinct elements of $\mathcal{A}$ such that
for distinct $i,j\geqslant 1$ we have $A_i\cap A_j=A_0$.
\end{fact}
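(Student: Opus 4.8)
The plan is to argue by induction on $n$, the uniform bound on the sizes of the members of $\mathcal{A}$. The base case $n=1$ is immediate: each member of $\mathcal{A}$ has at most one element, so among the infinitely many distinct members at most one is empty and the rest are pairwise distinct singletons, hence pairwise disjoint; taking $A_0=\emptyset$ (so that $\lvert A_0\rvert=0<1$) and enumerating these singletons yields the required sequence. Note that the strict bound $\lvert A_0\rvert<n$ will in fact be forced throughout: if the common intersection $A_0$ of distinct sets of size $\leqslant n$ had $n$ elements, then each such set, containing $A_0$ and having at most $n$ elements, would equal $A_0$, contradicting distinctness.

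For the inductive step I would fix $\mathcal{A}$ with all members of size $\leqslant n$ and split according to the dichotomy: either some point of $X$ lies in infinitely many members of $\mathcal{A}$, or no point does. In the first case, pick such a point $x$, set $\mathcal{A}'=\{A\in\mathcal{A}:x\in A\}$, and pass to $\mathcal{A}''=\{A\setminus\{x\}:A\in\mathcal{A}'\}$. Since all members of $\mathcal{A}'$ contain $x$ and are distinct, deleting $x$ preserves distinctness, so $\mathcal{A}''$ is an infinite family of sets of size $\leqslant n-1$. Applying the inductive hypothesis gives a kernel $A_0'$ and distinct $B_1,B_2,\ldots\in\mathcal{A}''$ with $B_i\cap B_j=A_0'$ for $i\neq j$; restoring $x$, i.e. setting $A_i=B_i\cup\{x\}$ and $A_0=A_0'\cup\{x\}$, one checks at once that $A_i\cap A_j=A_0$ and $\lvert A_0\rvert=\lvert A_0'\rvert+1<n$.

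In the second case, where every point of $X$ belongs to only finitely many members of $\mathcal{A}$, I would construct a pairwise disjoint sequence recursively: given distinct $A_1,\ldots,A_k$, their union is finite, so only finitely many members of $\mathcal{A}$ meet it; as $\mathcal{A}$ is infinite, some $A_{k+1}\in\mathcal{A}$ is disjoint from $A_1\cup\cdots\cup A_k$ (and hence distinct from the previous ones). This produces the desired $\Delta$-system with empty kernel $A_0=\emptyset$. The argument is routine; the only points needing attention are that passing from $\mathcal{A}'$ to $\mathcal{A}''$ preserves both infinitude and distinctness — which is exactly where membership of the common point $x$ is used — and the bookkeeping of cardinalities ensuring $\lvert A_0\rvert<n$ survives each step of the induction.
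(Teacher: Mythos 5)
Your proof is correct and is precisely the standard inductive argument the paper itself invokes: the paper gives no proof of Proposition \ref{delta-system}, merely remarking that it ``can be easily proved by induction'' and citing van Mill, and your dichotomy (a point lying in infinitely many members versus a greedily built disjoint subfamily) is that induction. One hair worth splitting: in the disjoint-family case, ``disjoint from the previously chosen sets'' implies ``distinct from them'' only when the new set is non-empty, so you should first discard the (at most one) empty member of $\mathcal{A}$, or simply note that infinitely many candidates remain at each step and pick one not chosen before.
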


Now we are ready to prove the following strengthening of Theorem \ref{Thrm-O}.
\begin{thrm}\label{thrm-gen}
Suppose that there is an open continuous surjection $\Phi$ from $C_p(X)$ onto $C_p(Y)$. Then there are spaces $Z_n\subseteq X^n\times Y$,
locally closed subspaces $B_n$ of $Z_n$, and locally closed
subspaces $Y_n$ of $Y$, $n\in \mathbb{N}^+$, such that each $Z_n$ admits a perfect finite-to-one mapping onto a closed subspace of
$X^n$, $Y_n$ is an image under a perfect {\bf\em finite-to-one} mapping of $B_n$, and $Y=\bigcup\{Y_n:\; n\in\mathbb{N}^+\}$.
\end{thrm}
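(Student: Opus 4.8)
The plan is to leave Okunev's construction completely intact and to prove that the maps $r_{m,n}$ it already produces are finite-to-one; no new decomposition is needed. Since $\mathbb{N}^+\times\mathbb{N}^+$ is countable, after re-enumerating the families $\{Z_{m,n}\}$, $\{B_{m,n}\}$, $\{Y_{m,n}\}$ by a single index, it suffices to fix a pair $(m,n)$ and show that every fiber $r_{m,n}^{-1}(y)$, $y\in Y_{m,n}$, is finite. Combined with property (6) (that $r_{m,n}$ is perfect) this yields a perfect finite-to-one map onto $Y_{m,n}$, while requirements (1)--(5) — in particular the perfect finite-to-one map $p_{m,n}$ onto a closed subspace of $X^n$ and the equality $Y=\bigcup Y_{m,n}$ — are untouched. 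So the whole theorem reduces to one finiteness claim.

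To prove it I fix $y\in Y_{m,n}=A_{m,n}\setminus A_{m,n-1}$ and argue by contradiction, assuming the fiber $r_{m,n}^{-1}(y)=\{\overline x\in(\beta X)^n:(\overline x,y)\in S_{m,n}\}$ is infinite. Each such $\overline x$ determines its underlying set $\{x_1,\dots,x_n\}\subseteq\beta X$ of cardinality $\leq n$, and only finitely many tuples share one underlying set, so I obtain infinitely many \emph{distinct} such sets. By Proposition \ref{delta-system} I pass to a sunflower: a core $C\subseteq\beta X$ with $|C|<n$ and infinitely many of the tuples whose underlying sets meet pairwise exactly in $C$, so that the petals $P_i$ (underlying set minus $C$) are nonempty and pairwise disjoint. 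The aim is then to show that the core alone controls $y$ at resolution $\tfrac1m$, i.e. $\Phi(O_X(\overline c;\tfrac1m))\subseteq\overline O_Y(y;1)$ for a tuple $\overline c$ listing $C$ (padded to length $n-1$; the degenerate case $C=\emptyset$ is even easier). Since $|C|\leq n-1$, this forces $y\in A_{m,n-1}$, contradicting $y\in Y_{m,n}$.

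The heart of the argument, and the step I expect to be hardest, is this surgery. Suppose $C$ does not control $y$: choose $g\in C_p(X)$ with $|\widetilde g(c)|<\tfrac1m$ for all $c\in C$ but $|\Phi(g)(y)|>1$. Continuity of $\Phi$ gives a basic neighbourhood $W$ of $g$, determined by a finite set $F\subseteq X$ and some $\delta>0$, with $|\Phi(g')(y)|>1$ for every $g'\in W$. As the petals are pairwise disjoint and $F$ is finite, I select a petal $P_i$ with $P_i\cap F=\emptyset$ (it is disjoint from $C$ by construction); since $\beta X$ is normal I take a Urysohn function $\lambda:\beta X\to[0,1]$ equal to $1$ on $F\cup C$ and equal to $0$ on a neighbourhood $U$ of $P_i$, and set $g'=(\lambda\upharpoonright X)\cdot g$. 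Then $g'=g$ on $F$, so $g'\in W$; on the dense set $U\cap X$ one has $g'\equiv0$, whence $\widetilde{g'}\equiv0$ on $U\supseteq P_i$ (this also sidesteps the $0\cdot\infty$ ambiguity coming from the two-point compactification $I$); and on $C$ one has $|\widetilde{g'}|\leq|\widetilde g|<\tfrac1m$. Thus $g'\in O_X(\overline x^{(i)};\tfrac1m)$, and since the control property defining $Z_{m,n}$ persists on the closure $S_{m,n}$ (as in the proof of (6) in \cite{O}), membership $(\overline x^{(i)},y)\in S_{m,n}$ yields $|\Phi(g')(y)|\leq1$, contradicting $g'\in W$. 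Hence $C$ controls $y$. The one delicate point in passing from ``$\overline c$ controls $y$'' to ``$y\in A_{m,n-1}$'' is that $C$ may contain points of the remainder $\beta X\setminus X$; here I rely on the closedness of the control relation and of $A_{m,n-1}$ (property (3)) from \cite{O}, which identifies control by $\overline c$ with membership of $(\overline c,y)$ in $S_{m,n-1}$, and hence of $y$ in $A_{m,n-1}$. This contradiction shows every fiber of $r_{m,n}$ is finite, and the theorem follows.
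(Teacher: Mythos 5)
Your overall plan coincides with the paper's: keep Okunev's decomposition untouched and prove the missing property, that $r_{m,n}$ is finite-to-one, by a $\Delta$-system argument plus a surgery on functions. The execution, however, has a genuine gap at its last step. Because you work with fibers of $S_{m,n}\subseteq(\beta X)^n\times\beta Y$, your sunflower core $C$ may contain points of $\beta X\setminus X$, and you must pass from ``$\overline c$ controls $y$'' to ``$y\in A_{m,n-1}$''. Your justification runs the needed implication backwards: closedness of the extended control relation $\widehat Z_{m,n-1}=\{(\overline z,q):\Phi(O_X(\overline z;\tfrac1m))\subseteq\{\varphi:\lvert\widetilde\varphi(q)\rvert\leqslant1\}\}$ yields $S_{m,n-1}=\overline{Z_{m,n-1}}\subseteq\widehat Z_{m,n-1}$, i.e.\ every point of the closure satisfies control; it does not yield $\widehat Z_{m,n-1}\subseteq S_{m,n-1}$, which is what you invoke when you say control by $\overline c$ is ``identified'' with membership of $(\overline c,y)$ in $S_{m,n-1}$. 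Nothing forces an extended control pair with coordinates in the remainder to be a limit of genuine control pairs from $X^{n-1}\times Y$, and $A_{m,n-1}$ is by definition the projection of $Z_{m,n-1}$, whose tuples have all coordinates in $X$; so the contradiction with $y\in Y_{m,n}$ is not established.

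The gap is repairable by your own technique: run the Urysohn surgery once more, with the finite set $C\setminus X$ playing the role of the petal, to show that $C\cap X$ already controls $y$; since $C\cap X\subseteq X$ and $\lvert C\cap X\rvert<n$, a tuple listing $C\cap X$ (padded) witnesses $y\in A_{m,n-1}$ directly from the definition of $Z_{m,n-1}$, and if $C\cap X=\emptyset$ you contradict surjectivity of $\Phi$ instead. Note, though, that the paper avoids the remainder entirely: Okunev's property (5) states that $B_{m,n}$ is a (locally closed) \emph{subset} of $Z_{m,n}$, so for $y\in Y_{m,n}$ one has $r_{m,n}^{-1}(y)\subseteq\{\overline x\in X^n:(\overline x,y)\in Z_{m,n}\}$ from the start. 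Granting that, the paper passes to underlying finite sets via the finite-to-one map $((x_1,\ldots,x_n),y)\mapsto(\{x_1,\ldots,x_n\},y)$, applies the $\Delta$-system lemma inside $\Fin(X)$, and performs the surgery with a function produced by complete regularity of $X$ alone ($g=f$ on $A_0\cup B$, $g\equiv0$ on $A_i\setminus A_0$); no Urysohn functions on $\beta X$, no persistence lemma, and no remainder points arise. Had you used property (5) at the outset, your argument would collapse to exactly this simpler one.
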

\begin{proof}
It is enough to prove that
\begin{enumerate}
 \item[(7)] the mapping $r_{m,n}$ is finite-to-one.
\end{enumerate}
To this end let us put
$$Z'_{m,n}=\{(A,y)\in \Fin(X)\times Y:\; \lvert A \rvert\leqslant n \text{ and } \Phi(O_X(A;\tfrac{1}{m}))\subseteq
\overline{O}_Y(y;1)\}.$$
The natural mapping $h:Z_{m,n}\rightarrow Z'_{m,n}$ defined by
$$h((x_1,\ldots, x_n),y)=(\{x_1,\ldots,x_n\},y),$$ is finite-to-one. Hence, if the set $\{A\in\Fin(X):\;(A,y)\in Z'_{m,n}\}$ is finite,
then the set
$\{\overline{x}\in X^n:\;(\overline{x},y)\in Z_{m,n}\}$ is also finite.
We will prove that this is the case.
\begin{claim}
For any $y\in Y_{m,n}$ the set $\{A\in\Fin(X):\;(A,y)\in Z'_{m,n}\}$ is finite.
\end{claim}
\begin{proofclaim}
This is basically \cite[Lemma 3.4]{M}. 
Assume the contrary. Then by Proposition \ref{delta-system}, there exists $A_0\in \Fin(X)$ and a sequence $A_1,A_2,\ldots$ of finite subsets of $X$
such that $\lvert A_0\rvert< n$, for distinct $i,j\geqslant 1$ we have $A_i\cap A_j=A_0$ and $(A_i,y)\in Z'_{m,n}$ for each $i\geqslant1$. 

To end the proof of the Claim we need to show $(A_0,y)\in Z'_{m,n}$. Indeed, then we would have $(A_0,y)\in Z'_{m,n-1}$
(since $\lvert A_0\rvert<n$) so $y\in A_{m,n-1}$ contradicting the assumption $y\in Y_{m,n}=A_{m,n}\setminus A_{m,n-1}$.

Let $f\in O_X(A_0;\tfrac{1}{m})$. We need to show that $\lvert\Phi(f)(y)\rvert\leqslant 1$. Assume the contrary. The set
$\Phi^{-1}(\{\varphi\in C_p(Y):\lvert \varphi(y)\rvert>1\})$ is an open neighborhood of $f$. Hence, there exists a finite set $B\in \Fin(X)$
and a natural number $k\in \mathbb{N^+}$ such that for any $g\in C_p(X)$ if $(f-g)\in O_X(B;\tfrac{1}{k})$, then $\lvert \varphi(g)(y)\rvert>1$.

For $i\geqslant 1$, the sets $A_i\setminus A_0$ are pairwise disjoint. Hence, there exists $i\geqslant 1$ such that
$B\cap (A_i\setminus A_0)=\emptyset$. Take $g\in C_p(X)$ satisfying
$$g\upharpoonright (A_0\cup B)=f\upharpoonright (A_0\cup B) \text{ and } g\upharpoonright (A_i\setminus A_0)\equiv 0.$$
Then $g\in O_X(A_i;\tfrac{1}{m})$ so $\lvert \varphi(g)(y)\rvert\leqslant 1$. On the other hand $(f-g)\in O_X(B;\tfrac{1}{k})$ so
$\varphi(g)(y)>1$, a contradiction.
\end{proofclaim}
For any $y\in Y_{m,n}$, we have $r_{m,n}^{-1}(y)\subseteq \{\overline{x}\in X^n:\;(\overline{x},y)\in Z_{m,n}\}.$
The latter set is, as we proved, finite so the mapping $r_{m,n}$ is finite-to-one.
\end{proof}

Theorem \ref{thrm-gen} answers Question 1.9 from \cite{O}.

\section{$\kappa$-discreteness}
Recall, that a space is called {\em $\kappa$-discrete} ({\em $\sigma$-discrete}) is it can be represented as a union of at most $\kappa$ many
(countably many) discrete subspaces. In \cite{O}, O.\ Okunev asked if $\sigma$-discreteness is preserved by the $t$-equivalence relation
(see \cite[Question 2.9]{O}). He also showed how to reduce this question to the following one: {\em Is a perfect image of a $\sigma$-discrete space
also $\sigma$-discrete?} However, the affirmative answer to this question is known (see \cite{BH}, \cite{G}).
G.\ Gruenhage proved even a stronger result that, for any infinite cardinal $\kappa$, a perfect image of a $\kappa$-discrete space
is $\kappa$-discrete. Since the reduction made by Okunev works also for $\kappa$-discrete spaces, we have the following theorem.
\begin{thrm}\label{k-discreteness}
If there is an open continuous surjection from $C_p(X)$ onto $C_p(Y)$ and $X$ is $\kappa$-discrete, then
$Y$ is $\kappa$-discrete.
\end{thrm}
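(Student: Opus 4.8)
The plan is to combine the decomposition provided by Theorem~\ref{thrm-gen} with Gruenhage's theorem on perfect images of $\kappa$-discrete spaces, verifying at each stage that $\kappa$-discreteness is preserved by the operation involved. First I would record three elementary stability properties of the class of $\kappa$-discrete spaces, valid for every infinite $\kappa$: (a) every subspace of a $\kappa$-discrete space is $\kappa$-discrete, since intersecting each discrete piece with the subspace yields a discrete subspace and there are still at most $\kappa$ pieces; (b) a union of at most $\kappa$ many $\kappa$-discrete spaces is $\kappa$-discrete, because $\kappa\cdot\kappa=\kappa$; and (c) if $X$ is $\kappa$-discrete then so is $X^n$ for each $n\in\mathbb{N}^+$, as $X^n$ is covered by the $\kappa^n=\kappa$ many products $D_{\alpha_1}\times\cdots\times D_{\alpha_n}$ of discrete pieces, each of which is itself discrete.

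Next I would establish the ``pullback'' step: if $f\colon Z\to C$ is a continuous finite-to-one map and $C$ is $\kappa$-discrete, then $Z$ is $\kappa$-discrete. Writing $C=\bigcup_{\alpha<\kappa}D_\alpha$ with each $D_\alpha$ discrete, I claim each $f^{-1}(D_\alpha)$ is discrete. Given $z\in f^{-1}(D_\alpha)$, pick an open $V\subseteq C$ with $V\cap D_\alpha=\{f(z)\}$; then $f^{-1}(V)$ is an open neighbourhood of $z$ meeting $f^{-1}(D_\alpha)$ in the finite set $f^{-1}(f(z))$. Since a finite subset of the $T_1$ space $Z$ is discrete, $z$ can be separated from the other (finitely many) points of $f^{-1}(f(z))$, so $z$ is isolated in $f^{-1}(D_\alpha)$. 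Thus $Z=\bigcup_{\alpha<\kappa}f^{-1}(D_\alpha)$ is $\kappa$-discrete.

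With these tools I would run the main argument. Apply Theorem~\ref{thrm-gen} to obtain spaces $Z_n$, $B_n$, $Y_n$ with $Y=\bigcup_{n\in\mathbb{N}^+}Y_n$. Since $X$ is $\kappa$-discrete, $X^n$ is $\kappa$-discrete by (c), hence so is the closed subspace of $X^n$ onto which $Z_n$ maps, by (a). As $Z_n$ maps onto that subspace by a continuous finite-to-one map, the pullback step gives that $Z_n$ is $\kappa$-discrete; then $B_n\subseteq Z_n$ is $\kappa$-discrete by (a). Finally $Y_n$ is a perfect image of $B_n$, so $Y_n$ is $\kappa$-discrete by Gruenhage's theorem \cite{G}, and therefore $Y=\bigcup_{n\in\mathbb{N}^+}Y_n$ is $\kappa$-discrete by (b).

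I expect the only genuine difficulty to lie in the ``pushforward'' step, namely that a perfect image of a $\kappa$-discrete space is again $\kappa$-discrete; everything else is bookkeeping with the decomposition of Theorem~\ref{thrm-gen} together with the cardinal arithmetic and the pullback lemma. That pushforward is exactly the content of Gruenhage's theorem, which I take as given, so the remaining work is simply to assemble the pieces in the correct order and to note that only continuity and finite-to-oneness (not perfectness) are needed on the $X^n$ side, while the perfect-image result is invoked only on the $Y$ side.
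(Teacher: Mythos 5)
Your proof is correct and follows essentially the same route as the paper: the paper's argument for Theorem~\ref{k-discreteness} is exactly Okunev's reduction (the decomposition into $Z_n$, $B_n$, $Y_n$ together with the elementary stability properties and the finite-to-one pullback lemma you spell out) combined with Gruenhage's theorem that perfect images of $\kappa$-discrete spaces are $\kappa$-discrete. The only cosmetic difference is that you invoke the strengthened Theorem~\ref{thrm-gen} where Okunev's original Theorem~\ref{Thrm-O} already suffices, since the finite-to-one property of the map $B_n\rightarrow Y_n$ is never used once Gruenhage's theorem handles arbitrary perfect maps.
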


\section{The property $C$}

From Theorem \ref{thrm-gen} we can conclude some new results concerning the behavior of dimension under the $t$-equivalence relation.
The main motivation for this is the following, famous in $C_p$-theory problem concerning dimension (see e.g. \cite[Problem 20 (1045)]{A} or
\cite[Problem 2.9]{M2}).
\begin{problem}(Arkhangel’skii)
Suppose $X$ and $Y$ are t-equivalent. Is it true that $\dim X=\dim Y$? 
\end{problem}
It is well known, that if we additionally assume that $C_p(X)$ and $C_p(Y)$ are {\em linearly} or {\em uniformly} homeomorphic the above
problem has an affirmative answer (see \cite{M2}).
In general, very little is known about the behavior of dimensions under the relation of $t$-equivalence.
We do not know for example if the spaces $C_p(2^\omega)$ and $C_p([0,1])$ or the spaces $C_p([0,1])$ and $C_p([0,1]^2)$ are homeomorphic
(see \cite{M2}).

We should recall the following two definitions (see \cite{E} and \cite{F}).

\begin{defn}
A normal space $X$ is called a $C$-space if, for any sequence of its open covers $(\mathcal{U}_i)_{i\in\omega}$,
there exists a sequence of disjoint families $\mathcal{V}_i$ of open sets such that $\mathcal{V}_i$ is a refinement of $\mathcal{U}_i$
and $\bigcup_{i\in\omega}\mathcal{V}_i$ is a cover of $X$.
\end{defn}

\begin{defn}
A normal space $X$ is called a $k$-$C$-space, where $k$ is a natural number $\geqslant 2$, if
for any sequence of its covers $(\mathcal{U}_i)_{i\in \omega}$ such that each cover $\mathcal{U}_i$ consists of at most $k$ open sets,
there exists a sequence of disjoint families $(\mathcal{V}_i)_{i\in \omega}$ of open sets such that for every $i\in \omega$ the family
$\mathcal{V}_i$ is a refinement of $\mathcal{U}_i$ and $\bigcup_{i\in\omega}\mathcal{V}_i$ is a cover of $X$.
\end{defn}

It is known that a normal space is {\em weakly infinite-dimensional} if and only if it is a $2$-$C$-space (see \cite{F}).
It is clear that we have the following sequence of inclusions
$$\text{weakly infinite-dimensional}=2\text{-}C \supseteq 3\text{-}C \supseteq \ldots$$
and that any $C$-space is a $k$-$C$-space for any $k\in\{2,3,\ldots\}$.

R.\ Cauty proved in \cite{C} the following theorem concerning weak infinite dimension.
\begin{thrm}(Cauty)\label{Cauty}
Let $X$ and $Y$ be metrizable compact spaces such that $C_p(Y)$ is an image of $C_p(X)$ under a continuous open mapping.
If for all $n\in\mathbb{N}^+$ the space $X^n$ is weakly infinite-dimensional, then for all $n\in\mathbb{N}^+$ the finite power $Y^n$
is also weakly infinite-dimensional.
\end{thrm}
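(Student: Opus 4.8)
The plan is to combine the decomposition provided by Theorem~\ref{thrm-gen} with three standard facts about weak infinite dimension in separable metrizable spaces: it is inherited by closed subspaces (hence, via the countable sum theorem, by $F_\sigma$- and in particular locally closed subspaces), it satisfies the countable closed sum theorem, and it is preserved in both directions by perfect finite-to-one maps. Since $X$ and $Y$ are compact metrizable, every space produced by Theorem~\ref{thrm-gen} is separable metrizable and either compact or locally compact, so these tools apply. First I would apply Theorem~\ref{thrm-gen} to the given open surjection, obtaining $Z_n\subseteq X^n\times Y$, locally closed $B_n\subseteq Z_n$ and locally closed $Y_n\subseteq Y$ with $Y=\bigcup_n Y_n$, where $Z_n$ maps by a perfect finite-to-one map onto a closed subset of $X^n$ and $Y_n$ is a perfect finite-to-one image of $B_n$. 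Fixing $\ell\in\mathbb{N}^+$, the goal becomes the weak infinite dimensionality of $Y^\ell$, and I would start from the countable decomposition
$$Y^\ell=\bigcup\{Y_{n_1}\times\cdots\times Y_{n_\ell}:\;(n_1,\ldots,n_\ell)\in(\mathbb{N}^+)^\ell\},$$
in which every summand is locally closed in $Y^\ell$.

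The core step is to prove that each summand $Y_{n_1}\times\cdots\times Y_{n_\ell}$ is weakly infinite-dimensional. Put $N=n_1+\cdots+n_\ell$. The product of the maps $Z_{n_i}\to X^{n_i}$ is a perfect finite-to-one map of $\prod_i Z_{n_i}$ onto a closed subset of $X^{n_1}\times\cdots\times X^{n_\ell}=X^N$, since perfect maps are closed under finite products and finiteness of fibers is preserved. By hypothesis $X^N$ is weakly infinite-dimensional, so this closed subset is too; pulling back along the perfect finite-to-one map shows that $\prod_i Z_{n_i}$ is weakly infinite-dimensional. As $\prod_i B_{n_i}$ is locally closed in $\prod_i Z_{n_i}$ it is weakly infinite-dimensional as well, and finally $Y_{n_1}\times\cdots\times Y_{n_\ell}$, being the perfect finite-to-one image of $\prod_i B_{n_i}$ under the product of the maps $B_{n_i}\to Y_{n_i}$, inherits weak infinite dimension.

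To pass from the summands to $Y^\ell$ I would use that each summand, being locally closed in the compact space $Y^\ell$, is $\sigma$-compact; writing it as a countable union of compact subsets, each such compact set is closed in the summand (hence weakly infinite-dimensional) and also closed in $Y^\ell$. Therefore $Y^\ell$ is a countable union of closed weakly infinite-dimensional subspaces, and the countable closed sum theorem gives that $Y^\ell$ is weakly infinite-dimensional. Since $\ell$ was arbitrary, all finite powers of $Y$ are weakly infinite-dimensional.

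I expect the genuine difficulty to lie not in these formal product-and-union manipulations but in the two-directional preservation of weak infinite dimension under perfect finite-to-one maps, which is precisely where the finite-to-one strengthening of Theorem~\ref{thrm-gen} is indispensable: for perfect maps with positive-dimensional fibers the image may fail to remain weakly infinite-dimensional, so the whole scheme breaks down without finiteness of the fibers of the maps $B_n\to Y_n$. The preimage direction relies on a perfect map with finite (thus zero-dimensional) fibers not destroying weak infinite dimension, while the image direction rests on the Hurewicz-type estimate for finite-to-one closed maps; carefully justifying these two preservation statements is the real content of the proof.
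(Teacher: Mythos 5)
Your argument is correct, but it is worth noting that the paper never proves Theorem \ref{Cauty} at all: the statement is quoted from Cauty's paper \cite{C}, whose original proof predates and does not use Theorem \ref{thrm-gen}. The closest result the paper actually proves is Theorem \ref{thrm-Cauty-gen} together with its Corollary (the case $k=2$), which runs through exactly the chain of preservation facts you use --- pull weak infinite dimension back to $Z_n$ from a closed subset of $X^n$ via Lemma \ref{lemma}, pass to the locally closed (hence $F_\sigma$) subspace $B_n$, push forward to $Y_n$ by a closed map with fibers of cardinality $<\mathfrak{c}$ (\cite[6.17]{F}), and finish with the countable closed sum theorem --- but applies it only to $Y$ itself, so the published corollary concludes merely that $Y$ is weakly infinite-dimensional (in the more general $\sigma$-compact metrizable setting). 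Your extra ingredient, the decomposition $Y^\ell=\bigcup\{Y_{n_1}\times\cdots\times Y_{n_\ell}\}$ together with the observation that products of perfect finite-to-one maps are perfect finite-to-one, is precisely what is needed to recover Cauty's full conclusion about all finite powers; it goes through cleanly here because compactness of $X$ makes every $Z_{n_i}$, and hence every product $\prod_i Z_{n_i}$, a compact metrizable space, so Lemma \ref{lemma} (with $k=2$) applies directly to the finite-to-one surjection onto the closed subset of $X^N$. In short, your route buys the stronger conclusion (all powers of $Y$) at the cost of compactness, while the paper's version trades that conclusion for $\sigma$-compact generality. One small correction to your closing remark: in the image direction the obstruction for general perfect maps is not positive-dimensional fibers but fiber cardinality --- the Cantor set maps onto the Hilbert cube by a perfect map with zero-dimensional fibers --- which is why the preservation fact is stated for closed maps with fibers of cardinality $<\mathfrak{c}$, and why the finite-to-one strengthening in Theorem \ref{thrm-gen} is indeed indispensable, as you say.
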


Using Theorem \ref{thrm-gen} we can prove a version of the above theorem of Cauty for $k$-$C$-spaces.
We need a suitable lemma, which is a version of \cite[Theorem 4.1]{P}.
\begin{lemma}\label{lemma}
Suppose that $K$ and $L$ are compact metrizable spaces. Let $f:K\rightarrow L$ be a continuous countable-to-one surjection.
If $L$ is a $k$-$C$ space, then so is $K$. 
\end{lemma}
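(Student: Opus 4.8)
The plan is to prove Lemma.The plan is to transfer the $k$-$C$ property from $L$ to $K$ along $f$, exploiting that the fibers of $f$ are small. First I would record the structural facts about $f$: since $K$ is compact and $L$ is Hausdorff, $f$ is perfect, and each fiber $f^{-1}(\ell)$ is a countable compact metrizable space, hence scattered and in particular zero-dimensional. The target is, given a sequence of covers $(\mathcal{U}_i)_{i\in\omega}$ of $K$ with each $\mathcal{U}_i=\{U_i^1,\dots,U_i^k\}$ consisting of at most $k$ open sets, to produce disjoint open families $(\mathcal{V}_i)_{i\in\omega}$ refining the $\mathcal{U}_i$ whose union covers $K$. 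The idea is to build such families fiberwise using zero-dimensionality, and then to glue them over $L$ using the $k$-$C$ property of $L$.

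The local step would run as follows. Fix $i$ and a point $\ell\in L$. The traces $\{U_i^j\cap f^{-1}(\ell)\}$ form an open cover of the compact zero-dimensional fiber $f^{-1}(\ell)$ by at most $k$ sets, so it admits a disjoint clopen refinement $\{P_1,\dots,P_k\}$ (some pieces possibly empty) with $P_j\subseteq U_i^j$. Separating the disjoint compacta $P_1,\dots,P_k$ by disjoint open sets of $K$ and intersecting with the corresponding $U_i^j$, I obtain pairwise disjoint open sets $G_1,\dots,G_k$ of $K$ with $G_j\subseteq U_i^j$ and $f^{-1}(\ell)\subseteq G_1\cup\cdots\cup G_k$. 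Since $f$ is closed, the set $\{\ell'\in L: f^{-1}(\ell')\subseteq G_1\cup\cdots\cup G_k\}$ is an open neighbourhood $W$ of $\ell$, and over $f^{-1}(W)$ the family $\{G_1,\dots,G_k\}$ is a disjoint open refinement of $\mathcal{U}_i$. Thus for every $i$ and every $\ell$ there is an open $W\ni\ell$ carrying a disjoint open refinement of $\mathcal{U}_i$ with at most $k$ members, the $j$-th member sitting inside $U_i^j$.

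It remains to globalize, and here the $k$-$C$ property of $L$ enters. I would convert, for each $i$, the data of the local refinements into a single open cover of $L$ of size at most $k$ that is compatible with them, feed the resulting sequence of size-$\leqslant k$ covers into the $k$-$C$ property of $L$ to obtain disjoint open families $\mathcal{V}_i^{L}$ refining them with $\bigcup_i\bigcup\mathcal{V}_i^{L}=L$, and then pull everything back: for $V\in\mathcal{V}_i^{L}$ contained in a region over which $\mathcal{U}_i$ has a disjoint refinement $\{G_1,\dots,G_k\}$, I put the sets $G_j\cap f^{-1}(V)$ into $\mathcal{V}_i$. Within a fixed level $i$ these sets are pairwise disjoint because the $G_j$ are disjoint and the members of $\mathcal{V}_i^{L}$ are disjoint; each refines $\mathcal{U}_i$ since $G_j\subseteq U_i^j$; and $\bigcup_i\bigcup\mathcal{V}_i^{L}=L$ forces $\bigcup_i\bigcup\mathcal{V}_i=K$ because $f$ is onto. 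This part I would model on \cite[Theorem 4.1]{P}.

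The main obstacle is precisely the first move of the last paragraph: manufacturing, out of the local disjoint refinements, a sequence of covers of $L$ of size at most $k$. The local refinements exist over a possibly large open cover $\{W\}$ of $L$, and naively one would feed this cover to $L$; but that cover can have far more than $k$ members, which would require the full property $C$ of $L$ rather than mere $k$-$C$. The point to exploit is that each local refinement already has at most $k$ pieces, indexed in lockstep with the $k$ sets $U_i^1,\dots,U_i^k$, so the only obstruction to gluing them into one global disjoint family of size $\leqslant k$ is a monodromy phenomenon, namely that the labelling $j$ need not extend continuously across $L$; resolving this coherently, keeping the number of sets down to $k$ while using compactness and metrizability of $L$, is where the real work lies and where I would follow the argument of \cite[Theorem 4.1]{P} closely. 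The zero-dimensionality (scatteredness) of the countable fibers is exactly what bounds each local refinement by $k$ pieces and makes this label transfer feasible.
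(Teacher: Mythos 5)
Your local step is correct but it is the easy half of the argument: countable compact fibers are scattered, hence zero-dimensional, and a fiberwise disjoint refinement can be spread to a neighborhood $f^{-1}(W)$ using normality and closedness of $f$. The genuine content of the lemma is the globalization, and your sketch does not contain it --- by your own admission (``this is where the real work lies''). Worse, the plan as stated runs into a hard obstruction rather than a missing detail. The $k$-$C$ property of $L$ only produces disjoint refinements of covers with \emph{at most $k$} members, while your local data live over an open cover $\{W_s\}$ of $L$ of unbounded size; what you would need is a cover of $L$ by at most $k$ open sets \emph{refining} $\{W_s\}$, and such a cover generally does not exist (already for $k=2$ and $L=[0,1]^2$: no two open sets, each contained in a small ball, can cover the square --- the nonexistence of small covers of bounded size is precisely a dimension-type obstruction). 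Nor does gluing the local families $\{G^s_1,\dots,G^s_k\}$ by their labels help: disjointness holds only within each chart $W_s$, so $G^s_j\cap G^{s'}_{j'}$ can be nonempty for $s\neq s'$, and resolving this ``monodromy'' for a \emph{fixed} level $i$ is exactly what cannot be done; the known arguments spend infinitely many levels to absorb such failures.

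The appeal to \cite[Theorem 4.1]{P} does not fill the hole, because Pol's proof is not a local-to-global gluing argument of the kind you envision. The paper's proof of the lemma goes a different route entirely: it observes that Pol's argument is an abstract scheme --- a transfinite decomposition of $K$ exploiting the scatteredness of the countable fibers --- which applies to any \emph{admissible} class of spaces, that is, any class closed under (i) closed subspaces, (ii) countable unions, (iii) perfect preimages of zero-dimensional spaces with fibers in the class, and (iv) the absorption condition of Garity and Rohm (if $A\subseteq X$ is in the class and every closed subset of $X$ disjoint from $A$ is in the class, then $X$ is in the class). The lemma then reduces to checking these four conditions for the class of $\sigma$-compact metrizable $k$-$C$-spaces, which is done by citing \cite{F} and \cite{GR}. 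None of these ingredients (scattered-height induction, the four closure properties) appear in your sketch, so the proposal as written is not a proof; to repair it you would have to replace the entire gluing paragraph by this reduction, keeping from your text only the observation that the fibers are countable compact.
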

\begin{proof}
From the proof of Theorem 4.1 in \cite{P}, it follows that it suffices to check that a class of $\sigma$-compact metrizable
$k$-$C$-spaces is admissible, i.e. satisfies the following four conditions
\begin{itemize}
 \item[(i)] if $X$ is a $k$-$C$-space and $Y$ is homeomorphic to a closed subspace of $X$, then $Y$ is a $k$-$C$-space;
 \item[(ii)] a space which is a countable union of $k$-$C$-spaces is a $k$-$C$-space;
 \item[(iii)] if $f:X\rightarrow Y$ is a perfect mapping, $Y$ is zero-dimensional and all fibers $f^{-1}(y)$ are $k$-$C$-spaces,
then $X$ is a $k$-$C$-space;
 \item[(iv)] if $A\subseteq X$, $A$ is a $k$-$C$-space and all closed subsets of $X$ disjoint from $A$ are $k$-$C$-spaces, then
$X$ is a $k$-$C$-space.
\end{itemize}
Condition (i) is \cite[Proposition 2.13]{F}. Condition (ii) is \cite[Theorem 2.16]{F}. Condition (iii) is \cite[Theorem 5.2]{F}.
Condition (iv) is actually \cite[Lemma 2]{GR} (although it deals with $C$-spaces, its proof works also for $k$-$C$-spaces).
\end{proof}

\begin{thrm}\label{thrm-Cauty-gen}
Let $X$ and $Y$ be metrizable $\sigma$-compact spaces such that $C_p(Y)$ is an image of $C_p(X)$ under a continuous open mapping.
Fix a natural number $k\geqslant2$. If for all $n\in\mathbb{N}^+$ the space $X^n$ is a $k$-$C$-space, then
$Y$ is also a $k$-$C$-space.
\end{thrm}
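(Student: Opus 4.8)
The plan is to feed the given open continuous surjection $\Phi\colon C_p(X)\to C_p(Y)$ into Theorem \ref{thrm-gen}, obtaining the spaces $Z_n\subseteq X^n\times Y$, the locally closed $B_n\subseteq Z_n$, the locally closed $Y_n\subseteq Y$, the perfect finite-to-one maps $p_n\colon Z_n\to C_n$ onto a closed $C_n\subseteq X^n$ and $r_n\colon B_n\to Y_n$, and the decomposition $Y=\bigcup_n Y_n$. Since $X$ is $\sigma$-compact metrizable, so is $X^n$, hence so are the closed set $C_n$, the perfect preimage $Z_n$, the locally closed set $B_n$, and the locally closed set $Y_n$; the same will apply to all the hyperspaces $\Fin_{\leqslant j}$ of these spaces. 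By condition (ii) and $Y=\bigcup_n Y_n$ it suffices to prove that each $Y_n$ is a $k$-$C$-space.

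Next I would move the $k$-$C$ property outward from $X$ to $Z_n$ using Lemma \ref{lemma} in its natural direction (image to countable-to-one preimage). The closed set $C_n\subseteq X^n$ is a $k$-$C$-space by (i); writing $C_n=\bigcup_i K_i$ with $K_i$ compact, pulling each $K_i$ back along the perfect map $p_n$, and applying Lemma \ref{lemma} to the finite-to-one map $p_n^{-1}(K_i)\to K_i$ of compacta, condition (ii) yields that $Z_n$ is a $k$-$C$-space. For the final step I will need more, namely that every finite symmetric product $\Fin_{\leqslant j}(Z_n)$ is a $k$-$C$-space. Running the identical scheme on the induced map $\Fin_{\leqslant j}(Z_n)\to\Fin_{\leqslant j}(C_n)$, which is again perfect and finite-to-one, reduces this to knowing that $\Fin_{\leqslant j}(C_n)$ is $k$-$C$, and, by (i), to knowing that $\Fin_{\leqslant j}(X^n)$ is a $k$-$C$-space.

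The heart of the argument is therefore the computation that $\Fin_{\leqslant j}(X^n)$ is a $k$-$C$-space, and this is where the full hypothesis (all finite powers of $X$ are $k$-$C$) is consumed. I would stratify $\Fin_{\leqslant j}(X^n)=\Fin_{=j}(X^n)\cup\Fin_{\leqslant j-1}(X^n)$, with $\Fin_{\leqslant j-1}(X^n)$ closed, and argue by induction on $j$. On the top stratum the set $D$ of pairwise-distinct $j$-tuples is open in $X^{nj}$, hence (being $F_\sigma$ in the $k$-$C$-space $X^{nj}$, via (i) and (ii)) a $k$-$C$-space; the symmetric-group action on $D$ is free, so $D\to\Fin_{=j}(X^n)$ is a covering map, and $\Fin_{=j}(X^n)$ is covered by countably many open sets each homeomorphic to an open subspace of $D$, whence $\Fin_{=j}(X^n)$ is $k$-$C$ by (ii). Induction together with (ii) completes the claim. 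I expect this to be the main obstacle: it is a \emph{domain-to-image} transfer along the (open, closed, finite-to-one) symmetric-product quotient, a direction that Lemma \ref{lemma} does \emph{not} supply, so it must be obtained separately from the free-action/covering structure rather than from the Lemma.

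Finally I would transfer $k$-$C$ to $Y_n$ through the fibers of $r_n$. Stratifying $Y_n$ by the finite cardinality of the fibers of the perfect map $r_n$, on each stratum $Y_n^{[j]}$ (locally closed, hence $\sigma$-compact) the assignment $y\mapsto r_n^{-1}(y)$ is a topological embedding of $Y_n^{[j]}$ into $\Fin_{\leqslant j}(Z_n)$ (here one verifies that the fiber map is continuous, which follows from upper semicontinuity of the fibers of the closed map $r_n$ together with the local constancy of $\lvert r_n^{-1}(y)\rvert$ on the stratum). Since $\Fin_{\leqslant j}(Z_n)$ is a $k$-$C$-space and the image of $Y_n^{[j]}$ is $\sigma$-compact, writing that image as a countable union of compacta, each closed in the ambient $k$-$C$-space, conditions (i) and (ii) give that $Y_n^{[j]}$ is $k$-$C$. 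Then $Y_n=\bigcup_j Y_n^{[j]}$ is $k$-$C$ by (ii), and hence so is $Y=\bigcup_n Y_n$, again by (ii).
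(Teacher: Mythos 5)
Your first two steps are fine and coincide with the paper's own argument: reduce to the sets $Y_n$ given by Theorem \ref{thrm-gen}, observe that $Z_n$ is $\sigma$-compact, and push the $k$-$C$ property from closed subsets of $X^n$ up to the compact pieces of $Z_n$ (hence to $Z_n$ and $B_n$) via Lemma \ref{lemma}. The proof breaks at the final, decisive step: the claim that on a fiber-cardinality stratum $Y_n^{[j]}$ the assignment $y\mapsto r_n^{-1}(y)$ is a topological embedding into $\Fin(Z_n)$, justified by ``upper semicontinuity of the fibers plus local constancy of the fiber cardinality.'' That implication is false. Closedness of $r_n$ gives only upper semicontinuity; continuity in the Vietoris topology also requires lower semicontinuity, which would follow from openness of $r_n$ (not available here) and is \emph{not} forced by constant fiber cardinality. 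Counterexample: let
$$B=\{(0,0),(0,1)\}\cup\{(\tfrac1n,0):n\geqslant1\}\cup\{(\tfrac1n,\tfrac1n):n\geqslant1\}\subseteq\RR^2,$$
let $Y=\{0\}\cup\{\tfrac1n:n\geqslant1\}$, and let $r:B\rightarrow Y$ be the first-coordinate projection. Both spaces are compact metrizable, $r$ is perfect, and \emph{every} fiber has exactly two points, so the stratum of cardinality $2$ is all of $Y$; yet $r^{-1}(\tfrac1n)=\{(\tfrac1n,0),(\tfrac1n,\tfrac1n)\}$ converges in the Vietoris topology to the singleton $\{(0,0)\}$ and not to $r^{-1}(0)=\{(0,0),(0,1)\}$, so the fiber map is not even continuous. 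Since maps of exactly this kind arise as the restrictions of $r_{m,n}$ to compact pieces, your last paragraph, and with it the whole transfer from $B_n$ to $Y_n$, collapses. (A secondary, unaddressed point in the same paragraph: the strata $Y_n^{[j]}$ are differences of projections of locally closed sets, and you give no argument that they are locally closed or $\sigma$-compact.)

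What is missing is exactly the nontrivial external ingredient the paper invokes at this point: Fedorchuk's theorem (\cite[6.17]{F}) that the image of a metrizable $k$-$C$-space under a closed mapping whose fibers have cardinality $<\mathfrak{c}$ is again a $k$-$C$-space; applied to $r_n:B_n\rightarrow Y_n$ this gives at once that each $Y_n$ is a $k$-$C$-space, after which countable unions with closed summands finish the proof. You correctly identified that Lemma \ref{lemma} only transfers the property from image to preimage and that the domain-to-image direction is the real obstacle, but that direction is a genuine theorem with a nontrivial proof, not something recoverable by a soft hyperspace embedding. Your computation that $\Fin_{\leqslant j}(X^n)$ is a $k$-$C$-space (via the free symmetric-group action on the open set of distinct tuples in $X^{nj}$) looks essentially correct, but it feeds into a step that is false, and in the paper's route it is not needed at all.
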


\begin{proof}
We apply Theorem \ref{thrm-gen} as follows. Let $Y_n$, $Z_n$, $B_n$ be as in the statement of Theorem \ref{thrm-gen}.
The space $Z_n\subseteq X^n\times Y$ is metrizable and $\sigma$-compact. Indeed, it is easy to check that a perfect preimage of a compact set
is compact, so from $\sigma$-compactness of $X$ follows $\sigma$-compactness of $Z_n$. Let $Z_n=\bigcup_{m=1}^\infty K_m$, where each $K_m$ is
compact.

Since $Z_n$ is a perfect finite-to-one preimage of a closed subspace of $X^n$ and a closed subspace of a metrizable $k$-$C$-space is a $k$-$C$-space
(see \cite[1.15 and 2.19]{F}), each $K_m$ is a $k$-$C$-space by Lemma \ref{lemma}. Since a countable union of closed
$k$-$C$-subspaces is a $k$-$C$-space (see \cite[2.16]{F}), we get that
$Z_n$ is a $k$-$C$-space and thus $B_n$ is such (as an $F_\sigma$ subspace of a metrizable $k$-$C$-space \cite[1.15 and 2.19]{F}).

Since the image of a metrizable $k$-$C$-space under a closed mapping with fibers of cardinality $<\mathfrak{c}$ is a
$k$-$C$-space (see \cite[6.17]{F}),
the space $Y_n$ is a $k$-$C$-space for any $n\in\mathbb{N}^+$.
Finally, since the property of being a $k$-$C$-space is invariant with respect to countable unions with closed summands (see \cite[2.16]{F}), we
get that $Y$ is a $k$-$C$-space.
\end{proof}

From the above theorem we can conclude a result very similar to Theorem \ref{Cauty} of R.\ Cauty we mentioned.

\begin{col}
Let $X$ and $Y$ be $\sigma$-compact metrizable spaces such that $C_p(Y)$ is an image of $C_p(X)$ under a continuous open mapping. If
for all $n\in\mathbb{N}^+$ the space $X^n$ is weakly infinite-dimensional, then $Y$ is also weakly infinite-dimensional.
\end{col}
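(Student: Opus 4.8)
The plan is to specialize Theorem \ref{thrm-Cauty-gen} to the case $k=2$ and then invoke the characterization, recalled in the previous section, that a normal space is weakly infinite-dimensional if and only if it is a $2$-$C$-space (see \cite{F}). In other words, the corollary is nothing but the instance $k=2$ of Theorem \ref{thrm-Cauty-gen}, re-expressed in the language of weak infinite-dimensionality.

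First I would observe that, since $X$ is metrizable, each finite power $X^n$ is metrizable, hence normal; consequently the hypothesis that $X^n$ is weakly infinite-dimensional for every $n\in\mathbb{N}^+$ is equivalent to saying that each $X^n$ is a $2$-$C$-space. Symmetrically, $Y$ is metrizable and therefore normal, so proving that $Y$ is a $2$-$C$-space is exactly the same as proving that $Y$ is weakly infinite-dimensional. These two translations use only the stated equivalence together with the fact that metrizable spaces are normal.

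With this dictionary in place I would simply apply Theorem \ref{thrm-Cauty-gen} with $k=2$: its standing assumptions, namely that $X$ and $Y$ are $\sigma$-compact metrizable and that $C_p(Y)$ is a continuous open image of $C_p(X)$, coincide verbatim with the hypotheses of the corollary, while its hypothesis on the powers $X^n$ is precisely the translated assumption above. The conclusion of the theorem then gives that $Y$ is a $2$-$C$-space, and reading this back through the equivalence yields that $Y$ is weakly infinite-dimensional, as required.

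I do not anticipate any real obstacle: the entire mathematical content is carried by Theorem \ref{thrm-Cauty-gen}, and the only point that warrants a moment's care is confirming that the characterization ``weakly infinite-dimensional $=$ $2$-$C$-space'' is legitimately applicable on both sides of the argument. This is immediate here, since all spaces in question are metrizable and hence normal, so the characterization applies without any extra hypotheses.
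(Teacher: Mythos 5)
Your proposal is correct and matches the paper's proof exactly: the paper also simply applies Theorem \ref{thrm-Cauty-gen} with $k=2$, relying on the stated equivalence between weak infinite-dimensionality and the $2$-$C$ property for normal (here metrizable) spaces. Your additional remarks spelling out that metrizability guarantees normality on both sides are a harmless elaboration of the same argument.
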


\begin{proof}
Apply Theorem \ref{thrm-Cauty-gen}
with $k=2$.
\end{proof}

Using the same technique, we can prove a similar theorem about $C$-spaces.

\begin{thrm}\label{thrm-C-spaces}
Let $X$ and $Y$ be $\sigma$-compact metrizable spaces. Suppose, that $C_p(Y)$ is an image of $C_p(X)$ under a continuous open mapping.
If $X$ is a $C$-space, then $Y$ is also a $C$-space.
\end{thrm}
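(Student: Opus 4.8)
The plan is to follow the proof of Theorem~\ref{thrm-Cauty-gen} essentially verbatim, systematically replacing $k$-$C$-spaces by $C$-spaces. Two points need attention, since in Theorem~\ref{thrm-Cauty-gen} one of them was handled by a lemma tailored to $k$-$C$-spaces and the other was simply assumed. First I would record the $C$-space analogue of Lemma~\ref{lemma}: if $f\colon K\to L$ is a continuous countable-to-one surjection between compact metrizable spaces and $L$ is a $C$-space, then $K$ is a $C$-space. This is proved exactly as Lemma~\ref{lemma}, by invoking \cite[Theorem~4.1]{P} once we verify that the class of $\sigma$-compact metrizable $C$-spaces is admissible, i.e.\ satisfies conditions (i)--(iv). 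Each of these has a $C$-space version available in \cite{F}, and condition~(iv) is in fact exactly \cite[Lemma~2]{GR}, which is stated for $C$-spaces in the first place.

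The second, and crucial, point is that every finite power $X^n$ is a $C$-space. Unlike in Theorem~\ref{thrm-Cauty-gen}, this is not among our hypotheses, so it must be derived from the assumption that $X$ itself is a $C$-space. Here I would use the fact that finite products of compact metrizable $C$-spaces are again $C$-spaces; this is precisely the feature distinguishing $C$-spaces from $k$-$C$-spaces, the latter failing to be finitely productive, and it is what permits the present theorem to assume only that $X$ (rather than each of its powers) is a $C$-space. Concretely, writing $X=\bigcup_m C_m$ with each $C_m$ compact, each $C_m$ is a compact $C$-space as a closed subspace of $X$; hence every box $C_{m_1}\times\cdots\times C_{m_n}$ is a compact $C$-space, and $X^n$, being a countable union of these closed $C$-subspaces, is a $C$-space as well. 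I expect this product step to be the main obstacle, since productivity of property $C$ is delicate in general; the argument rests entirely on the positive product theorem being available in the compact metrizable category.

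With these two facts in hand, the remainder would repeat the argument of Theorem~\ref{thrm-Cauty-gen}. Applying Theorem~\ref{thrm-gen} yields locally closed sets $Y_n\subseteq Y$, spaces $Z_n\subseteq X^n\times Y$ and locally closed $B_n\subseteq Z_n$, with each $Z_n$ metrizable and $\sigma$-compact, say $Z_n=\bigcup_m K_m$ with each $K_m$ compact. Let $g\colon Z_n\to F$ be the perfect finite-to-one map onto the closed subspace $F$ of $X^n$ provided by Theorem~\ref{thrm-gen}. Since $X^n$ is a $C$-space, so is its closed subspace $F$, and hence so is each compact set $g(K_m)\subseteq F$; the countable-to-one lemma above then shows that each $K_m$ is a $C$-space. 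Therefore $Z_n$ is a $C$-space, and so is its $F_\sigma$ subspace $B_n$. The space $Y_n$ is the image of $B_n$ under a perfect finite-to-one map, hence a $C$-space by the $C$-space analogue of the closed-map-with-small-fibers theorem (cf.\ \cite[6.17]{F}). Finally, each $Y_n$ is locally closed, hence $F_\sigma$, in the metrizable space $Y$, so $Y=\bigcup_n Y_n$ is a countable union of closed $C$-subspaces and is therefore a $C$-space.
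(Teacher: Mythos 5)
Your proposal is correct in substance and its skeleton matches the paper's: the key first move (deducing that each $X^n$ is a $C$-space from Rohm's product theorem for compact metrizable $C$-spaces, \cite[Theorem 3]{R}, together with invariance under countable unions with closed summands, \cite[2.24]{F}) is exactly what the paper does, and the rest is the same decomposition $Y_n$, $Z_n$, $B_n$ coming from Theorem~\ref{thrm-gen}. The two middle steps, however, are handled with different tools, and in a curious way you and the paper swap strategies. For $Z_n$, you re-run the machinery of Lemma~\ref{lemma} (Pol's admissible-class theorem from \cite{P}) on the compact pieces $K_m$; this is sound --- indeed for $C$-spaces the admissibility conditions are if anything easier to verify, since \cite[Lemma 2]{GR} is stated for $C$-spaces in the first place --- but it is heavier than necessary: the paper instead quotes Fedorchuk's preimage theorem \cite[5.4]{F} (within metrizable spaces, property $C$ passes to preimages under the relevant mappings with $C$-space fibers), which disposes of $Z_n$ in one line with no decomposition into compacta. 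For $Y_n$, the roles reverse: you invoke a ``$C$-space analogue'' of \cite[6.17]{F}, i.e.\ a metrizable image theorem for closed maps with fibers of cardinality $<\mathfrak{c}$, but that reference is the $k$-$C$ statement, and it is not clear that \cite{F} contains the corresponding $C$-space statement for non-compact metrizable spaces; the paper pointedly avoids claiming it, and instead uses the compact version \cite[6.4]{F} together with $\sigma$-compactness. This is the one step you should patch, and the patch is immediate inside your own setup: $B_n$ is $F_\sigma$ in the $\sigma$-compact space $Z_n$, hence $\sigma$-compact; writing $B_n=\bigcup_m L_m$ with $L_m$ compact, each restriction $r\upharpoonright L_m$ is a continuous finite-to-one map of compacta, so $r(L_m)$ is a compact $C$-space by \cite[6.4]{F}, and $Y_n=\bigcup_m r(L_m)$ is then a countable union of closed $C$-subspaces, hence a $C$-space by \cite[2.24]{F}. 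With that substitution your argument goes through completely.
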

\begin{proof}
Since the finite product of compact metrizable $C$-spaces is a $C$-space (see \cite[Theorem 3]{R}) and since being a $C$-space is invariant
with respect to countable unions with closed summands (see \cite[2.24]{F}), the space $X^n$ is a $C$-space for every
$n\in \mathbb{N}^+$.

We apply Theorem \ref{thrm-gen} as in the proof of Theorem \ref{thrm-Cauty-gen}.
Let $Y_n$, $Z_n$, $B_n$ be as in the statement of Theorem \ref{thrm-gen}.

It is known that within the class of metrizable spaces, the property of being a $C$-space is invariant with respect to
$F_\sigma$ subspaces (see \cite[2.25]{F}) and preimages under continuous mappings with fibers being $C$-spaces (see \cite[5.4]{F}). Hence
the space $Z_n$ is a $C$-space and so is $B_n$. It is also known that for compact spaces property $C$ is preserved by
continuous mappings with fibers of cardinality $<\mathfrak{c}$ (see \cite[6.4]{F}). Thus from the $\sigma$-compactness of $Z_n$ (see the proof
of Theorem \ref{thrm-Cauty-gen}) and the fact that a countable union of closed $C$-spaces is a $C$-space (see \cite[2.24]{F}), we conclude that $Y_n$
is a $C$-space. By \cite[2.24]{F} $Y=\bigcup_n Y_n$ is a $k$-$C$-space.
\end{proof}

\section{Countable-dimension}

Let us recall the following definition
\begin{defn}
A space $X$ is countable-dimensional if $X$ can be represented as a countable union of finite-dimensional subspaces.
\end{defn}

It is well known that every countable-dimensional metrizable space is a $C$-space.
In \cite{M} W.\ Marciszewski modifying a technique from \cite{C} proved the following
\begin{thrm}(Marciszewski)\label{Ma}
Suppose that $X$ and $Y$ are t-equivalent metrizable spaces. Then $X$ is countable dimensional if and only if $Y$ is so.
\end{thrm}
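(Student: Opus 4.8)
The plan is to follow the template of the proofs of Theorems \ref{thrm-Cauty-gen} and \ref{thrm-C-spaces}, replacing the preservation results for the property $C$ by their counterparts for countable-dimensionality. The decisive point is that, unlike the mapping theorems for $C$-spaces used above (which were stated only for compact fibers and forced the $\sigma$-compactness hypothesis), the mapping theorems for $\dim$ needed here are available for \emph{arbitrary} metrizable spaces; this is what will let the argument cover all metrizable $X$ and $Y$, as the statement demands. Note also that one cannot simply quote Theorem \ref{thrm-C-spaces}, since a $C$-space need not be countable-dimensional. Since $X$ and $Y$ are $t$-equivalent, a homeomorphism between $C_p(X)$ and $C_p(Y)$ is an open continuous surjection in both directions, so it suffices to prove the single implication ``if there is an open continuous surjection of $C_p(X)$ onto $C_p(Y)$ and $X$ is countable-dimensional, then $Y$ is countable-dimensional'' and then to apply it with the roles of $X$ and $Y$ interchanged.

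First I would record the elementary stability properties of the class of countable-dimensional metrizable spaces, all immediate from the definition together with the subspace, countable sum and product theorems for $\dim$ in the metrizable case (see \cite{E}): the class is closed under finite products, every subspace of a member is again a member, and a countable union of countable-dimensional subspaces of a fixed metrizable space is countable-dimensional, i.e.\ these subspaces form a $\sigma$-ideal. In particular each power $X^n$ is countable-dimensional, and hence so is every closed subspace of $X^n$.

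Now I would apply Theorem \ref{thrm-gen} to obtain $Z_n\subseteq X^n\times Y$, the locally closed $B_n\subseteq Z_n$ and the locally closed $Y_n\subseteq Y$ with $Y=\bigcup_n Y_n$, and chase countable-dimensionality through the diagram. For $Z_n$: it carries a perfect finite-to-one map onto a closed, hence countable-dimensional, subspace $W_n$ of $X^n$; decomposing $W_n$ into countably many finite-dimensional subspaces and pulling this decomposition back, each piece maps onto a finite-dimensional subspace by a perfect map with finite (so $0$-dimensional) fibers, so Hurewicz's dimension-lowering theorem $\dim\le\dim+\dim f$ keeps each piece finite-dimensional, and $Z_n$ is countable-dimensional. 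Its subspace $B_n$ is then countable-dimensional by monotonicity. For $Y_n$: it is the image of the countable-dimensional space $B_n$ under a perfect finite-to-one map, so the image form of the finite-to-one mapping theorem gives that $Y_n$ is countable-dimensional. Finally $Y=\bigcup_n Y_n$ is countable-dimensional by the $\sigma$-ideal property.

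The main obstacle is the image step, namely the assertion that a perfect finite-to-one map between metrizable spaces sends countable-dimensional spaces to countable-dimensional spaces. This is the dimension-raising direction, where the clean Hurewicz inequality $\dim Y\le\dim X+(k-1)$ applies only to maps whose fibers have cardinality $\le k$, whereas finite-to-one does not bound the multiplicity uniformly; moreover the pullback device that settled the preimage fails here, since the restriction of the map to a single piece of a decomposition of $B_n$ need not be closed. I would resolve this by invoking (or reproving, via the standard analysis of the boundaries of fibers of closed maps) the finite-to-one mapping theorem for countable-dimensional metrizable spaces, reducing to bounded multiplicity, applying the bounded-multiplicity inequality to finite-dimensional pieces, and reassembling by the countable sum theorem; this is precisely the technical core of Marciszewski's original argument in \cite{M}. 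Since none of these dimension-theoretic inputs require compactness, the conclusion holds for all metrizable $X$ and $Y$, and applying the implication in both directions yields the desired equivalence.
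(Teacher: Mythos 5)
Your proposal is correct and takes essentially the same route as the paper: the paper obtains this statement as a special case of its Theorem \ref{thrm-strong}, whose proof is exactly your argument — apply Theorem \ref{thrm-gen} in each direction and chase countable-dimensionality through finite products \cite[Theorem 5.2.20]{E}, preimages under closed mappings with finite-dimensional fibers \cite[Proposition 5.4.5]{E}, subspaces \cite[5.2.3]{E}, images under closed finite-to-one mappings \cite[Theorem 5.4.3]{E}, and countable unions \cite[5.2.8]{E}. Your added details (the Hurewicz dimension-lowering argument for the preimage step, and the remark that these mapping theorems, unlike the $C$-space ones, require no compactness) merely unpack the references the paper cites.
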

As in the previous section, we can use Theorem \ref{thrm-gen} to prove a slightly more general result.
\begin{thrm}\label{thrm-strong}
Let $X$ and $Y$ be metrizable spaces.
Suppose, that $C_p(Y)$ is an image of $C_p(X)$ under a continuous open mapping.
If $X$ is countable-dimensional, then so is $Y$.
\end{thrm}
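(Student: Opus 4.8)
The plan is to run exactly the machinery used in the proofs of Theorems \ref{thrm-Cauty-gen} and \ref{thrm-C-spaces}, but now feeding the decomposition of Theorem \ref{thrm-gen} into the behaviour of countable-dimensionality under perfect finite-to-one maps. The point that makes the present argument cleaner is that, because Theorem \ref{thrm-gen} supplies \emph{finite-to-one} maps, I can invoke the classical Hurewicz dimension theorems directly, and so no compactness (hence no $\sigma$-compactness) hypothesis on $X$ and $Y$ is needed; this is precisely why Theorem \ref{thrm-strong} can drop the $\sigma$-compactness that was imposed in Theorems \ref{thrm-Cauty-gen} and \ref{thrm-C-spaces}. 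First I would record the stability properties of countable-dimensionality in the class of metrizable spaces that I will use: every subspace of a countable-dimensional metrizable space is countable-dimensional, a countable union of countable-dimensional subspaces is countable-dimensional, and a finite product of countable-dimensional metrizable spaces is countable-dimensional (the last one following from the inequality $\dim(A\times B)\leqslant\dim A+\dim B$ applied to a decomposition into finite-dimensional pieces; see \cite{E}). In particular $X^n$ is countable-dimensional for every $n\in\mathbb{N}^+$, and hence so is every subspace of $X^n$.

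Next I would apply Theorem \ref{thrm-gen} to obtain the spaces $Z_n\subseteq X^n\times Y$, the locally closed subspaces $B_n\subseteq Z_n$ and the locally closed subspaces $Y_n\subseteq Y$ with $Y=\bigcup_n Y_n$, together with the perfect finite-to-one maps $p_n\colon Z_n\to X^n$ and $r_n\colon B_n\to Y_n$. All these spaces are metrizable, being subspaces of the metrizable spaces $X^n\times Y$ and $Y$. To see that $Z_n$ is countable-dimensional, note that $p_n$ maps $Z_n$ onto a closed, hence countable-dimensional, subspace $C\subseteq X^n$; writing $C=\bigcup_m C_m$ with $\dim C_m\leqslant0$, the restriction of $p_n$ to the full preimage $p_n^{-1}(C_m)$ is again perfect and finite-to-one, so Hurewicz's dimension-lowering theorem (a closed map with zero-dimensional fibres does not raise $\dim$) gives $\dim p_n^{-1}(C_m)\leqslant\dim C_m=0$. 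Thus $Z_n=\bigcup_m p_n^{-1}(C_m)$ is a countable union of zero-dimensional subspaces and so is countable-dimensional, and consequently its subspace $B_n$ is countable-dimensional as well.

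It remains to pass to the image $Y_n=r_n(B_n)$, and this is the step I expect to be the main obstacle. Here one needs a dimension-\emph{raising} statement: the image of a countable-dimensional metrizable space under a perfect finite-to-one map is countable-dimensional. This cannot be obtained by the naive route of decomposing the domain into zero-dimensional pieces, because the restriction of $r_n$ to a piece that is not a full preimage need not be closed (indeed a mere continuous finite-to-one image of a zero-dimensional space can fail to be countable-dimensional, so closedness is genuinely used). The way I would handle it is to stratify the base by fibre cardinality, setting $Y_n^{(k)}=\{y\in Y_n:\ \lvert r_n^{-1}(y)\rvert\leqslant k\}$, so that $Y_n=\bigcup_{k\geqslant1}Y_n^{(k)}$; since the restriction of a closed map to the full preimage of a subset is closed onto that subset, $r_n$ restricts to a closed map of $r_n^{-1}(Y_n^{(k)})$ onto $Y_n^{(k)}$ with fibres of at most $k$ points, and $r_n^{-1}(Y_n^{(k)})$ is countable-dimensional as a subspace of $B_n$. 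The genuine input is then the countable-dimensional analogue of Hurewicz's dimension-raising theorem for closed maps with fibres of cardinality $\leqslant k$ (for $k=1$ this is trivial, as such a map is a homeomorphism); this is the counterpart, in the non-compact metrizable setting, of the role played by \cite[6.4]{F} and \cite[6.17]{F} in the proofs of Theorems \ref{thrm-C-spaces} and \ref{thrm-Cauty-gen}, and I would cite it from \cite{E}. Applying it on each stratum shows every $Y_n^{(k)}$ is countable-dimensional, whence $Y_n=\bigcup_k Y_n^{(k)}$ is countable-dimensional.

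Finally, $Y=\bigcup_n Y_n$ is a countable union of countable-dimensional subspaces and is therefore countable-dimensional, which completes the argument and recovers the metrizable case of Theorem \ref{Ma} of Marciszewski as the instance of $t$-equivalence.
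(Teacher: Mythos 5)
Your proposal is correct and follows essentially the same route as the paper: apply Theorem \ref{thrm-gen} and then use the stability of countable-dimensionality within metrizable spaces under subspaces, countable unions, closed mappings with finite-dimensional fibers (for $Z_n$), and closed finite-to-one images (for $Y_n$), all of which the paper cites directly from Engelking \cite{E} (5.2.3, 5.2.8, Proposition 5.4.5, Theorem 5.4.3). The only difference is cosmetic: your zero-dimensional decomposition of $p_n(Z_n)$ and your stratification of $Y_n$ by fibre cardinality merely unpack the proofs of the facts the paper cites wholesale --- in particular \cite[Theorem 5.4.3]{E} already covers arbitrary closed finite-to-one images of countable-dimensional metrizable spaces, so the stratification step is subsumed by that citation.
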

\begin{proof}
Since $X$ is countable-dimensional and metrizable, every finite power $X^n$ is countable-dimensional (see \cite[Theorem 5.2.20]{E}).
It is also known that within the class metrizable space, countable-dimensionality is invariant with respect to:
preimages under closed mappings with finite-dimensional fibers \cite[Proposition 5.4.5]{E}, subspaces \cite[5.2.3]{E}, images
under closed finite-to-one mappings \cite[Theorem 5.4.3]{E}) and countable unions \cite[5.2.8]{E}.
Thus it is enough to apply Theorem \ref{thrm-gen}.
\end{proof}

\begin{remark}
Theorems \ref{thrm-Cauty-gen}, \ref{thrm-C-spaces}, \ref{thrm-strong} cannot be concluded directly from Theorem \ref{Thrm-O}.
Let us observe that if we take $X=[0,1]$, $Z_n=B_n=[0,1]^n$ and $Y=Y_n=[0,1]^\omega$, then the thesis of Theorem \ref{Thrm-O} holds.
Indeed, in that case $Z_n$ maps onto $X^n$ by a perfect finite-to-one mapping (the identity) and $B_n$ maps onto $Y_n$ perfectly, so
Okunev's theorem from \cite{O} (Theorem \ref{Thrm-O}) does not prove that spaces $[0,1]$ and $[0,1]^\omega$ are not $t$-equivalent.
To conclude the latter, we need to use the fact that the existence of a continuous open surjection between
$C_p(X)$ and $C_p(Y)$ implies that $Y_n$ is an image of $B_n$ under a finite-to-one mapping.
\end{remark}

\textbf{Acknowledgment.}
\medskip

The author is indebted to Witold Marciszewski for valuable comments and remarks.


\begin{thebibliography}{99}

\bibitem{A}
A.V.\ Arkhangel’skii, {\em Problems in $C_p$-theory}, in: Open Problems in Topology, J.~van~Mill and G.M.\ Reed (eds.),
Elsevier 1990, 601-615.

\bibitem{BH}
D.\ Burke, R.\ Hansell, {\em Perfect maps and relatively discrete collections},
Papers on General topology and applications (Amsterdam, 1994), 54–56,
Ann. New York Acad. Sci., 788, New York.

\bibitem{C}
R.\ Cauty, {\em Sur l'invariance de la dimension infinie forte par t-\'equivalence}, Fund. Math. 160 (1999), 95—100.

\bibitem{E}
R.\ Engelking, {\em Theory of dimensions finite and infinite}, Sigma Series in Pure Mathematics, 10. Heldermann Verlag, Lemgo, 1995.

\bibitem{EP}
R.\ Engelking, E.\ Pol, {\em Countable-dimensional spaces: a survey}, Dissertationes Math. 216 (1983).

\bibitem{F}
V.V.\ Fedorchuk, {\em Some classes of weakly infinite-dimensional spaces}, J. Math. Sci. (N. Y.) 155 (2008), no. 4, 523–570.

\bibitem{GR}
D.\ Garity, D.\ Rohm, {\em Property C, refinable maps and dimension raising maps}. Proc. Amer. Math. Soc. 98 (1986), no. 2, 336–340.

\bibitem{G}
G. Gruenhage, {\em Covering compacta by discrete and other separated sets}, preprint.


\bibitem{M}
W.\ Marciszewski, {\em On properties of metrizable spaces $X$ preserved by t-equivalence}, Mathematika 47 (2000), 273–279.

\bibitem{M2}
W.\ Marciszewski, {\em Function Spaces}, in: Recent Progress in General Topology II, M.~Hu\v{s}ek and J.\ van\ Mill (eds.),
Elsevier 2002, 345-369.

\bibitem{vM}
J.\ van\ Mill, {\em The Infinite-Dimensional Topology of Function Spaces}, North-Holland Mathematical Library 64, North-Holland, Amsterdam, 2001.


\bibitem{O}
O.\ Okunev, {\em A relation between spaces implied by their t-equivalence}, Topology Appl. 158 (2011), 2158–2164.

\bibitem{P}
R.\ Pol, {\em On light mappings without perfect fibers on compacta}, Tsukuba J. Math. 20 (1996), no. 1, 11–19.

\bibitem{R}
D.\ Rohm, {\em Products of infinite-dimensional spaces} Proc. Amer. Math. Soc. 108 (1990), no. 4, 1019–1023.
\end{thebibliography}
\end{document}